\documentclass[11pt,a4paper]{amsart}

\usepackage{hyperref}

\author{Clifford Gilmore}
\title{Dynamics of Generalised Derivations and Elementary Operators}
\date{\today}

\address{Department of Mathematics and Statistics, P.O. Box 68, FI-00014 University of Helsinki, Finland.}
\thanks{This research has been supported by the Magnus Ehrnrooth Foundation and the Academy of Finland via the Centre of Excellence in Analysis and Dynamics Research (project no. 271983).}

\email{clifford.gilmore@helsinki.fi}
\keywords{Hypercyclic, generalised derivation,  elementary operator}
\subjclass[2000]{Primary 47A16; Secondary 47B47}

\usepackage[UKenglish]{babel}

\usepackage{amssymb,mathrsfs,mathtools}

\usepackage[all]{xy}

\usepackage{enumitem}

\newtheorem{thm}{Theorem}[section]

\newtheorem{cor}[thm]{Corollary}
\newtheorem{prop}[thm]{Proposition}

\theoremstyle{definition}

\newtheorem{eg}[thm]{Example}

\newtheorem{rmk}[thm]{Remark}

\numberwithin{equation}{section}		

\newcommand{\tr}[1]{\mathrm{tr}\!\left( #1 \right)} 	

\newcommand{\ptSpec}[1]{\sigma_\mathrm{p}\left( #1 \right)}

\DeclareMathOperator{\ran}{ran}

\begin{document}

\begin{abstract}
We identify concrete examples of hypercyclic generalised derivations acting  on separable ideals of operators and establish some necessary conditions for their hypercyclicity.  We also consider the dynamics of elementary operators acting on  particular Banach algebras, which reveals surprising hypercyclic behaviour on the space of bounded linear operators on the Banach space constructed by Argyros and Haydon.
\end{abstract}

\maketitle

\section{Introduction}

For a given Banach space $X$ and its space of bounded linear operators $\mathscr{L}(X)$, the \emph{generalised derivation} $\tau_{A,B} \colon \mathscr{L}(X) \to \mathscr{L}(X)$ is induced by fixed  $A, B \in \mathscr{L}(X)$ and defined as
\begin{equation*}  
\tau_{A,B}(T) = L_A(T) - R_B(T) = AT - TB
\end{equation*}
where $T \in \mathscr{L}(X)$ and $L_A, R_B \colon \mathscr{L}(X) \to \mathscr{L}(X)$ are, respectively,  the left and right multiplication operators.

Generalised derivations, also known as intertwining operators, have been studied from many aspects since the initial work by Rosenblum~\cite{Ros56}, Lumer and Rosenblum~\cite{LR59} and Anderson and Foia{\c{s}}~\cite{AF75}.  In the setting of operator ideals of $\mathscr{L}(X)$ they have been investigated by, amongst others, Fialkow~\cite{Fia78,Fia79,Fia81}, Maher~\cite{Mah92} and Kittaneh~\cite{Kit95}.  Extensive surveys on this class of operators can be found in~\cite{Fia92}, \cite{BR97} and \cite{ST06}, however their hypercyclic properties remain largely unexplored outside the special case of the commutator map $L_A - R_A$ \cite{GST16}.  

We recall for a separable Banach space $X$ that $T \in \mathscr{L}(X)$ is  \emph{hypercyclic}  if there exists  $x \in X$ such that its orbit under $T$ is dense in $X$, that is
\begin{equation*}
\overline{\{T^n x : n \geq 0\}} = X.
\end{equation*}
Linear dynamics has been a highly active area of research since the late 1980s and comprehensive accounts of the topic can be found in \cite{BM09} and \cite{GEP11}.

Hitherto in the setting of separable operator ideals, Bonet et al.~\cite{BMP04} characterised the hypercyclicity of the multipliers $L_A$ and $R_B$ and subsequently Bonilla and Grosse-Erdmann~\cite{BGE12} identified a sufficient condition for when $L_A$ is frequently hypercyclic. 
This naturally led to the study of more complicated maps built from the basic multipliers  and the investigation of the hypercyclic properties of  commutator maps $L_A - R_A$ was initiated in \cite{GST16}.  

However, hypercyclicity of  commutator maps  is quite a subtle problem and the complete answer remains unclear.  
In particular,  it was shown in  \cite{GST16} that reasonable candidates to give hypercyclic commutator maps turn out to be non-hypercyclic.

The next natural question relates to the  hypercyclicity of generalised derivations $\tau_{A,B}$ and since we are dealing in this case with pairs $(A,B)$ of operators we have more freedom.  Indeed, in Section \ref{sec:HcGD}  of this paper we  uncover concrete classes of hypercyclic generalised derivations acting  on separable operator ideals. 

In Sections \ref{sec:NonHcGD} and \ref{sec:NormalGD} we  identify many classes of non-hypercyclic generalised derivations and  we extend some observations made in \cite{GST16}  on commutator maps  to the  generalised derivation case.  
In Section \ref{sec:ElopArgHay} we prove  that certain Banach algebras do not support hypercyclic elementary operators. This  gives the surprising example of a   generalised derivation that has   different hypercyclic properties on $\mathscr{L}(X_{AH})$ and its non-trivial ideals, where $X_{AH}$ denotes the  Banach space constructed by Argyros and Haydon~\cite{AH11}.  We note   the class of elementary operators has been  studied  since the 1950s~\cite{LR59} and they have been surveyed in~\cite{Cur92}, \cite{Fia92}, \cite{ST06} and \cite{CM11}.

\section{The Setting and Hypercyclic Generalised Derivations} \label{sec:HcGD}

In this section we identify  concrete examples of hypercyclic generalised derivations and we demonstrate there is  a plentiful supply of them.  To this end we first recall  some basic definitions and establish our setting.

Hypercyclicity requires separability of the underlying space, however the space $\mathscr{L}(X)$ is usually non-separable under the operator norm topology when $X$ is a classical Banach space.  
To overcome this obstacle, one option  is to consider spaces of operators which are separable  when endowed with weaker topologies.  
Using this approach, Chan~\cite{Cha99}  investigated the hypercyclicity of the left multiplier $L_A$ on the space $\left( \mathscr{L}(H), \: SOT \right)$ under the strong operator topology, where $H$ is a separable Hilbert space.  
Further results in this setting can be found in \cite{CT01}, \cite{MRRM02}, \cite{MP03}, \cite{BMP04}, \cite{Pet07} and \cite{GM16}.

However, while it is natural to consider the question of maps acting on $\mathscr{L}(X)$ endowed with weaker topologies, our approach here is to consider generalised derivations acting on separable ideals of $\mathscr{L}(X)$.  

We say $\left(J, \lVert \: \cdot \:  \rVert_J \right)$ is a \emph{Banach ideal} of $\mathscr{L}(X)$ if
\begin{enumerate}[label=(\roman*),leftmargin=*,  itemsep=.6ex]
\item $J \subset \mathscr{L}(X)$ is a linear subspace,

\item the norm $\lVert \: \cdot \: \rVert_J$ is complete in $J$ and $\lVert S \rVert \leq  \lVert S \rVert_J$ for all $S \in J$, \label{defn:banIdealii}

\item $BSA \in J$ and $\lVert BSA \rVert_J \leq   \lVert B \rVert  \left\lVert A \right\rVert  \lVert S \rVert_J$ for $A, B \in \mathscr{L}(X)$ and $S \in J$,\label{defn:banIdealiii}

\item the rank one operators $x^* \otimes x \in J$ and $\lVert x^* \otimes x \rVert_J =   \lVert x^* \rVert   \lVert  x \rVert$ for all $x^* \in X^*$ and $x \in X$. \label{defn:banIdealiv}
\end{enumerate}
We recall that a \emph{rank one} operator  $x^* \otimes x \colon X \to X$  is defined as
\begin{equation*}
(x^* \otimes x)(z) = \langle x^*, z \rangle x = x^*(z) x
\end{equation*}
for $x^* \in X^*$, $x \in X$  and  any $z \in X$.  The space $\mathscr{F}(X)$ of \emph{finite rank} operators  is the linear span of the rank one operators, that is
\begin{equation*}
\mathscr{F}(X) =  X^* \otimes X = \left\{ \sum_{i=1}^n x_i^* \otimes x_i : x_i^* \in X^*,\;  x_i \in X \text{ and } n \geq 1 \right\}.
\end{equation*}
For brevity we say a Banach ideal $J$ is \emph{admissible} when it contains the finite rank operators as a dense subset with respect to the  norm $\lVert \: \cdot \: \rVert_J$, that is
\begin{equation*}
\overline{\mathscr{F}(X)}^{\lVert \: \cdot \: \rVert_J} = J.
\end{equation*}
When the dual $X^*$ is separable, classical examples of separable admissible Banach ideals include the space $\left( \mathscr{N}(X), \lVert \: \cdot \: \rVert_N \right)$ of nuclear operators equipped with the nuclear norm
 and the space   $\mathscr{K}(X)$ of compact operators, under the operator norm, when  $X^*$ also possesses the approximation property.
When $X$ is a separable Hilbert space, the spaces $\left( C_p, \lVert \: \cdot \: \rVert_p \right)$ of Schatten $p$-class operators  with the Schatten norm are  important examples of separable admissible Banach ideals.  We caution that  admissibility does not automatically imply separability of the ideal, for example $\left( \mathscr{N}(X), \lVert \: \cdot \: \rVert_N \right)$ is non-separable when $X^*$ is non-separable.
Further details on Banach ideals and  the approximation property can be found, for instance, in~\cite{Rya02}.

In the setting of separable operator ideals, Bonet et al.~\cite{BMP04} characterised when the mulitipliers $L_A$ and $R_B$ are hypercyclic using tensor  techniques developed by Mart{\'{\i}}nez-Gim{\'e}nez and Peris~\cite{MP03}. Their main results, expressed in the terminology of Banach ideals, are as follows: for a separable admissible Banach ideal $J \subset \mathscr{L}(X)$
\begin{enumerate}[label=\Roman*., leftmargin=*, rightmargin=1em, itemsep=1ex]
\item $L_A$ is hypercyclic on $J$ if and only if $A \in \mathscr{L}(X)$ satisfies the Hypercyclicity Criterion,

\item $R_B$ is hypercyclic on $J$ if and only if $B^* \in \mathscr{L}(X^*)$ satisfies the Hypercyclicity Criterion.
\end{enumerate}
The aforementioned Hypercyclicity Criterion is a sufficient condition for hypercyclicity which was initially demonstrated by Kitai~\cite{Kit82} and then independently rediscovered by Gethner and Shapiro~\cite{GS87}.  The standard version can be found for instance in 
\cite[Theorem 3.12]{GEP11}, however we will use the following  equivalent statement which can be found in \cite[Proposition 3.20]{GEP11}.

We say that $T \in \mathscr{L}(X)$ satisfies the \emph{Hypercyclicity Criterion} if there exists a dense linear subspace $X_0 \subset X$, an increasing sequence $(n_k)$ of positive integers and linear maps $S_{n_k}\colon X_0 \to X$, $k \geq 1$, such that for any $x \in X_0$
\begin{enumerate}[label=(\roman*), leftmargin=*, itemsep=1ex]
\item $T^{n_k} x \longrightarrow 0$,  \label{eq:hcc1}
\item $S_{n_k}x \longrightarrow 0$,  \label{eq:hcc2}
\item $T^{n_k}S_{n_k}x \longrightarrow x$ \label{eq:hcc3}	
\end{enumerate}
as $k \to \infty$. 
If $T$ satisfies the Hypercyclicity Criterion then it is hypercyclic (cf. \cite[Theorem 1.6]{BM09} or \cite[Theorem 3.12]{GEP11}).  We note, however, it was shown by de la Rosa and Read~\cite{DLRR09} that there exist hypercyclic operators that do not satisfy the criterion (cf.~\cite[Section 4.2]{BM09}).

To obtain non-trivial instances of hypercyclic generalised derivations, we naturally require that $\tau_{A,B}$ has at least dense range.   
Fialkow characterised  in \cite[Corollary 3.21]{Fia78} and \cite[Proposition 4.1]{Fia81}  when generalised derivations have dense range in the setting of certain operator ideals.  He showed the following for a separable Hilbert space $H$. (For convenience here 
$\left( C_\infty, \lVert \, \cdot \, \rVert \right)$ denotes the space of compact operators on $H$ under the operator norm topology.)

\begin{enumerate}
\item For $1 \leq p \leq \infty$, $\tau_{A,B} \colon C_p \to C_p$ has dense range if and only if $\tau_{B,A}$ is injective in $\mathscr{L}(H)$.  \label{Fialkow01}

\item $\tau_{A,B} \colon C_1 \to C_1$ is injective if and only if $\tau_{B,A} \colon C_\infty \to C_\infty$ has dense range.

\item  If $\tau_{B,A}$ has dense range in $\mathscr{L}(H)$ then $\tau_{A,B} \colon C_1 \to C_1$ is injective.

\item If $\tau_{A,B} \colon C_1 \to C_1$ has dense range then $\tau_{B,A} \colon C_\infty \to C_\infty$ is injective.
\end{enumerate}

Moreover,  Fialkow~\cite[Proposition 4.1]{Fia81} also showed  if $\tau_{A,B}$ has dense range in $\mathscr{L}(H)$ then \eqref{Fialkow01} is equivalent to $\tau_{B,A} \colon C_p \to C_p$ being injective for $1 \leq p \leq \infty$.

Obvious examples  of hypercyclic generalised derivations can be obtained by taking either $A \equiv 0$ or $B \equiv 0$ in $\tau_{A,B}$, which brings us back to the case  of the multipliers $L_A$ and $R_B$. 
We are interested here to analyse the types of hypercyclic generalised derivations that exist and our starting point  is to consider one of the most fundamental classes of hypercyclic operator: operators of the form $I + B_w$ where $I$ denotes the identity map and $B_w$ is a weighted backward shift.

Let $X = c_0$ or $\ell^p$, for $1 \leq p \leq \infty$, where $c_0$ denotes the space of sequences that tend to zero and $\ell^p$ is the space of $p$-summable sequences. The \emph{weighted backward shift} $B_w \in \mathscr{L}(X)$ is  defined as 
\begin{equation*}
B_w (x_1, x_2, x_3, \dotsc) = (w_2 x_2, w_3 x_3, \dotsc)
\end{equation*}
for $(x_n) \in X$ and where $w = \left(w_n\right)_{n \geq 2} \in \ell^\infty$ is a bounded weight sequence of nonzero scalars.  $B_w$ is bounded on $X$ if and only if $\sup_n \lvert w_n \rvert < \infty$.  If $w_n = 1$ for all $n$ we  denote the unweighted backward shift as $B \colon X \to X$.


The approach taken  in \cite{BMP04} to identify   hypercyclic left and right multipliers was to apply tensor techniques and the hypercyclic comparison principle.
While it is possible to take a similar approach here,  we prefer in Example \ref{eg:HCCgenDev} to prove directly  that the  generalised derivation satisfies the stronger property of the Hypercyclicity Criterion  on any admissible Banach ideal $J \subset \mathscr{L}(X)$  (this was not made explicit in the results of \cite{BMP04}).  Example \ref{eg:HCCgenDev} provides a basic model from which we make more general observations.
\begin{eg}  \label{eg:HCCgenDev}
Let $X = c_0$ or $\ell^p$, for $1 < p < \infty$ and  let $J \subset \mathscr{L}(X)$ be an admissible Banach ideal. That $J$ is separable follows from the separability of $X^*$. We  will demonstrate  that the generalised derivation 
\begin{equation*}
L_{B_w} - R_{-I} \colon J \to J
\end{equation*}
satisfies the Hypercyclicity Criterion, where $w = \left(w_n\right)_{n \geq 2} \in \ell^\infty$ is a bounded weight sequence of nonzero scalars such that $\sup_{n\geq 1} \lvert w_n \rvert < \infty$.

First recall that Salas~\cite{Sal95} showed the operator $I + B_w \in \mathscr{L}(X)$ is hypercyclic, while  Le{\'o}n-Saavedra and Montes-Rodr{\'{\i}}guez~\cite{LM97}  proved it  satisfies the Hypercyclicity Criterion  when  $\sup_{n\geq 1} \lvert w_n \rvert < \infty$. 

We thus assume  $\sup_n \lvert w_n \rvert < \infty$, so by \cite{LM97}  we have a dense linear subspace $X_0 \subset X$, a sequence $(n_k)$ and linear maps $S_{n_k}\colon X_0 \to X$ such that $I + B_w$ satisfies   
the Hypercyclicity Criterion.

To show that $L_{B_w} - R_{-I}$ satisfies the Hypercyclicity Criterion on $J$, we consider the subspace $X^* \otimes X_0 \subset J$, the sequence $(n_k)$ and the linear maps $L_{S_{n_k}} \colon X^* \otimes X_0 \to X^* \otimes X$ defined as
$L_{S_{n_k}} (x^* \otimes x) = x^* \otimes S_{n_k} x$
for $x^* \otimes x \in X^* \otimes X_0$.

To see that $X^* \otimes X_0$ is dense in $J$, let $x^* \otimes x \in J$.  By assumption $X_0$ is dense in $X$ and hence there exists a sequence $(x_n) \subset X_0$ that converges to $x \in X$ as $n \to \infty$. We next consider the sequence $\left( x^* \otimes x_n \right) \subset X^* \otimes X_0$ and notice that  
\begin{equation*}
\lVert x^* \otimes x_n - x^* \otimes x \lVert_J = \lVert x^* \otimes \left( x_n - x \right) \lVert_J = \left\lVert x^* \right\rVert \left\lVert x_n - x \right\rVert \longrightarrow 0.
\end{equation*}
 It then follows by linearity and admissibility that $X^* \otimes X_0$ is dense in $J$.

We note that
\begin{align*}
\left( L_{B_w} - R_{-I} \right) (x^* \otimes x) &=  x^* \otimes B_w x \; + \; x^* \otimes x  = x^* \otimes (I + B_w) x
\shortintertext{and moreover $n$-fold iteration gives}
\left( L_{B_w} - R_{-I} \right)^n (x^* \otimes x) &=   x^* \otimes (I + B_w)^n x.
\end{align*}
Next we verify that $L_{B_w} - R_{-I}$ satisfies the Hypercyclicity Criterion for any rank one operator $x^* \otimes x \in X^* \otimes X_0$.
\begin{enumerate}[label=(\roman*),leftmargin=*, rightmargin=0em, itemsep=1ex, topsep=1.5ex]
\item  
$\displaystyle \begin{aligned}[t]
\left\lVert  \left( L_{B_w} - R_{-I} \right)^{n_k} (x^* \otimes x) \right\rVert_J &= \left\lVert x^* \otimes \left( I + B_w \right)^{n_k} x \right\rVert_J \\
&= \lVert x^* \rVert \left\lVert (I + B_w)^{n_k} x \right\rVert \longrightarrow 0.
\end{aligned}$

\item $\displaystyle
\left\lVert  L_{S_{n_k}} (x^* \otimes x) \right\rVert_J = \left\lVert x^* \otimes S_{n_k} x \right\rVert_J 
= \lVert x^* \rVert \left\lVert S_{n_k} x \right\rVert \longrightarrow 0.
$

\item 
$\displaystyle \begin{aligned}[t]
\lVert \left( L_{B_w} - R_{-I} \right)^{n_k} L_{S_{n_k}}  (x^*  \otimes x &)  \; -  \; x^* \otimes x \rVert_J \\
&= \left\lVert  x^* \otimes \big( (I + B_w)^{n_k} S_{n_k}x  \; - \; x \big) \right\rVert_J \\
&= \lVert  x^* \rVert \left\lVert (I + B_w)^{n_k} S_{n_k}x \; - \; x \right\rVert \longrightarrow 0.
\end{aligned}$
\end{enumerate}
Hence  $L_{B_w} - R_{-I}$ satisfies the Hypercyclicity Criterion for rank one operators and by taking linear combinations of rank one operators 
it further follows that it is satisfied on the dense subspace  $X^* \otimes X_0 \subset J$. Thus we have obtained a hypercyclic generalised derivation on $J$.
 \hfill \qedsymbol
\end{eg}

The weighted backward shift from Example \ref{eg:HCCgenDev} is in fact a special case of the  following class of  operators.
Following the terminology of \cite[p.~219]{GEP11}, we say that $T \in \mathscr{L}(X)$ is an \emph{extended backward shift}  if
\begin{equation*}
\mathrm{span} \left( \bigcup_{j=0}^\infty \left( \ker T^j \cap \ran{T^j} \right) \right)
\end{equation*}
is dense in the Banach space $X$.

The following theorem   generalises Example \ref{eg:HCCgenDev} to extended backward shifts.  The theorem also uses the operator $e^T  \in \mathscr{L}(X)$, which is defined as
\begin{equation*}
e^T  =  \sum_{j=0}^\infty \frac{1}{j!} T^j  
\end{equation*}
for $T \in \mathscr{L}(X)$.

\begin{thm} \label{thm:extBwShiftHc}
Let $X$ be a Banach space and let $J \subset \mathscr{L}(X)$ be a separable admissible Banach ideal.  If $T \in \mathscr{L}(X)$ is an extended backward shift then 
the generalised derivations $L_T - R_{-I}$ and $L_{T'} - R_{-I}$ satisfy the Hypercyclicity Criterion on $J$, where $T' = \sum_{j=1}^\infty \frac{1}{j!} T^j$.
\end{thm}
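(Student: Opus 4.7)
My plan is to model the argument directly on Example \ref{eg:HCCgenDev}, lifting everything to rank-one operators so that $L_T - R_{-I}$ and $L_{T'} - R_{-I}$ act on $x^* \otimes x$ as $x^* \otimes (I+T)x$ and $x^* \otimes (I + T')x = x^* \otimes e^T x$ respectively; here I use the identity $I + T' = \sum_{j=0}^\infty T^j/j! = e^T$. Iterating then yields
\[
(L_T - R_{-I})^n(x^* \otimes x) = x^* \otimes (I+T)^n x \quad\text{and}\quad (L_{T'} - R_{-I})^n(x^* \otimes x) = x^* \otimes e^{nT} x.
\]

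The key external ingredient I would invoke is the classical fact (see e.g.\ \cite[Chapter~11]{GEP11}) that when $T \in \mathscr{L}(X)$ is an extended backward shift, both $I + T$ and $e^T$ satisfy the Hypercyclicity Criterion on $X$. This supplies, for each of the two operators, a dense subspace $X_0 \subset X$, an increasing sequence $(n_k)$, and linear maps $S_{n_k} \colon X_0 \to X$ verifying conditions \ref{eq:hcc1}--\ref{eq:hcc3}. On the ideal $J$ I would then take $X^* \otimes X_0$ as the dense subspace and define the associated right inverses by $L_{S_{n_k}}(x^* \otimes x) = x^* \otimes S_{n_k} x$, extended by linearity.

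The three conditions of the Hypercyclicity Criterion follow on rank-one operators because property \ref{defn:banIdealiv} gives $\lVert x^* \otimes z \rVert_J = \lVert x^* \rVert \lVert z \rVert$, which reduces each $J$-norm convergence statement to the corresponding $X$-norm convergence already available from the criterion for $I + T$ or $e^T$. The linearity of $L_{S_{n_k}}$ and of the iterates $(L_T - R_{-I})^n$, $(L_{T'} - R_{-I})^n$ then propagates the criterion from rank-one operators to all of $X^* \otimes X_0$.

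The main technical step requiring care is confirming the density of $X^* \otimes X_0$ in $(J, \lVert \cdot \rVert_J)$. This follows from admissibility, which gives density of $\mathscr{F}(X)$ in $J$, together with an elementary approximation identical to the one in Example \ref{eg:HCCgenDev}: given $x^* \otimes x \in \mathscr{F}(X)$, pick $x_n \in X_0$ with $x_n \to x$ in $X$ and use $\lVert x^* \otimes (x - x_n)\rVert_J = \lVert x^*\rVert \lVert x - x_n\rVert$, then pass to finite sums by linearity. Apart from this, the most delicate point is locating in the literature the precise statement that $I + T$ and $e^T$ satisfy the \emph{full} Hypercyclicity Criterion (and not merely that they are hypercyclic) for extended backward shifts; with that input secured, the rest of the argument is a direct lift of Example \ref{eg:HCCgenDev}.
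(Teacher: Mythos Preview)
Your proposal is correct and follows essentially the same route as the paper's own proof: both lift the Hypercyclicity Criterion for $I+T$ and $e^T$ (the paper cites this as \cite[Theorem~8.6]{GEP11}, attributed to Grivaux--Shkarin, rather than Chapter~11) to $J$ via rank-one operators, using the dense subspace $X^*\otimes X_0$ and the maps $L_{S_{n_k}}$, exactly as in Example~\ref{eg:HCCgenDev}. The only cosmetic difference is that the paper writes $(e^T)^{n_k}$ where you write $e^{n_kT}$, and it handles the density of $X^*\otimes X_0$ by reference to Example~\ref{eg:HCCgenDev} rather than repeating the argument.
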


\begin{proof}
By an unpublished result of Grivaux and Shkarin~\cite{GS07}, which can be found in
~\cite[Theorem 8.6]{GEP11}, the operators $I + T$ and $e^T$ satisfy the Hypercyclicity Criterion on $X$.   (That $e^{B_w}$ satisfies the Hypercyclicity Criterion for the weighted backward shift $B_w$ on  $c_0$ or $\ell^p$ for $1 \leq p < \infty$ was originally shown in~\cite{DSW97}.)

Applying the argument from Example \ref{eg:HCCgenDev} to the operator $I + T$, it follows that the  generalised derivation $L_T - R_{-I}$  satisfies the Hypercyclicity Criterion on $J$ and is hence hypercyclic.

Next we consider the operator $e^T \colon X \to X$.  By the result of Grivaux and Shkarin~\cite{GS07} there exists a dense linear subspace $X_0 \subset X$, an increasing sequence  of positive integers $(n_k)$ and linear maps $S_{n_k}\colon X_0 \to X$, for $k \geq 1$,  such that $e^T$ satisfies 
the Hypercyclicity Criterion.

We will   show that the Hypercyclicity Criterion is satisfied by 
\begin{equation*}
L_{T'} - R_{-I} \colon J \to J
\end{equation*} 
  for the dense subspace $X^* \otimes X_0 \subset J$, the sequence $(n_k)$ and the linear maps $L_{S_{n_k}} \colon X^* \otimes X_0 \to X^* \otimes X$ defined by $L_{S_{n_k}} (x^* \otimes x) = x^* \otimes S_{n_k}(x)$.

Notice for any rank one operator $x^* \otimes x \in X^* \otimes X_0$ that
\begin{align*}
\lVert \left( L_{T'} - R_{-I} \right)^{n_k} (x^* \otimes x) \rVert_J &= \lVert x^* \otimes \left(T' + I\right)^{n_k} x \rVert_J  \\
&= \lVert x^* \otimes \left( e^T \right)^{n_k} x \rVert_J 
= \lVert x^* \rVert \left\lVert \left( e^T \right)^{n_k} x \right\rVert \to 0
\end{align*}
so condition \ref{eq:hcc1} is satisfied.
Conditions \ref{eq:hcc2} -- \ref{eq:hcc3} follow as in Example \ref{eg:HCCgenDev} and arguing as before we have that the generalised derivation $L_{T'} - R_{-I}$ satisfies the Hypercyclicity Criterion on $J$ and is hence  hypercyclic.
\end{proof}

Another concrete example covered by Theorem \ref{thm:extBwShiftHc} is given by the extended backward shift $D_w \in \mathscr{L}(X)$ defined as
\begin{equation*}
D_w (x_1, x_2, x_3, \dotsc) = (w_2 x_2, w_4 x_4, w_6 x_6, \dotsc )
\end{equation*}
for $(x_n) \in X$ and where $X = c_0$ or $\ell^p$ for $1 < p < \infty$.
Grivaux~\cite{Gri03b}  showed that  $I + D_w \in \mathscr{L}(X)$ satisfies the Hypercyclicity Criterion, so it follows from Theorem \ref{thm:extBwShiftHc} that $I + D_w$  and $e^{D_w}$ each induce hypercyclic generalised derivations on separable admissible Banach ideals of $\mathscr{L}(X)$.

Next we introduce duality to uncover another family of  hypercyclic generalised derivations, this time induced using forward shifts.  For $X = c_0$ or $\ell^p$, for $1 \leq p \leq \infty$, the \emph{weighted forward shift}  $S_w \in \mathscr{L}(X)$ is defined as
\begin{equation*}
S_w (x_1, x_2, x_3, \dotsc) = (0, w_1 x_1, w_2 x_2, \dotsc )
\end{equation*}
for $(x_n) \in X$ and where $w = \left(w_n\right)_{n \geq 1} \in \ell^\infty$ is a bounded weight sequence of nonzero scalars.  When $w_n = 1$ for all $n$ we denote the unweighted forward shift as $S \colon X \to X$.

\begin{eg}  \label{eg:HCCgenDerRight}
Let $X = c_0$ or $\ell^p$, for $1 < p < \infty$ and  $J \subset \mathscr{L}(X)$ be an admissible Banach ideal.
We note the separability of $X^*$ implies that $J$ is separable.  
We modify the argument of Example \ref{eg:HCCgenDev} to show that 
\begin{equation*}
L_I - R_{-S_{(w_{n+1})}}  \colon J \to J
\end{equation*}
satisfies the Hypercyclicity Criterion, where $S_{(w_{n+1})} \in \mathscr{L}(X)$  is the weighted forward shift defined by
\begin{equation*}
S_{(w_{n+1})} (x_1, x_2, x_3, \dotsc) = (0, w_2 x_1, w_3 x_2, \dotsc ).
\end{equation*}  
Note that  the adjoint $(I + S_{(w_{n+1})})^* = I + B_w \in \mathscr{L}(X^*)$ satisfies the Hypercyclicity Criterion~\cite{LM97} for some dense linear subspace $X^*_0 \subset X^*$, a sequence $(n_k)$  and linear maps $S_{n_k}\colon X^*_0 \to X^*$, for $k \geq 1$.

To demonstrate that $L_I - R_{-S_{(w_{n+1})}}$  satisfies the Hypercyclicity Criterion on $J$, we consider the dense subspace $X^*_0 \otimes X \subset J$, the sequence $(n_k)$ and the linear maps $R_{S_{n_k}} \colon X^*_0 \otimes X \to X^* \otimes X$ defined as
$R_{S_{n_k}} (x^* \otimes x) = (S_{n_k}^* x^*) \otimes x$.

For any rank one operator $x^* \otimes x \in X^*_0 \otimes X$ we get that
\begin{align*}
\left\lVert \left( L_I - R_{-S_{(w_{n+1})}} \right)^{n_k} (x^* \otimes x) \right\rVert_J &= \left\lVert \left( \left( I + S_{(w_{n+1})} \right)^{* \, n_k} x^* \right) \otimes x \right\rVert_J\\
&= \left\lVert \left( \left( I + B_w \right)^{n_k} x^* \right) \otimes x \right\rVert_J \\
&= \left\lVert \left( I + B_w \right)^{n_k} x^* \right\rVert \lVert x \rVert \longrightarrow 0.
\end{align*}
Using a similar argument to Example \ref{eg:HCCgenDev}, it follows that conditions \ref{eq:hcc2} and \ref{eq:hcc3} of the Hypercyclicity Criterion are also fulfilled and hence  the generalised derivation $L_I - R_{-S_{(w_{n+1})}}$   is  hypercyclic on $J$.
\hfill \qedsymbol
\end{eg}
Example \ref{eg:HCCgenDerRight} can be extended to the following theorem similar to how Theorem \ref{thm:extBwShiftHc} generalises Example \ref{eg:HCCgenDev}.
\begin{thm} \label{thm:extBwShiftAdjHc}
Let $X$ be a Banach space and let $J \subset \mathscr{L}(X)$ be a separable admissible Banach ideal.  If $T^* \in \mathscr{L}(X^*)$ is an extended backward shift then the generalised derivations $L_I - R_{-T}$ and $L_I - R_{T'}$ satisfy the Hypercyclicity Criterion on $J$, where $T' = \sum_{j=1}^\infty \frac{1}{j!} T^j$.
\end{thm}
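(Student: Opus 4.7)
The plan is to mirror the proof of Theorem \ref{thm:extBwShiftHc} with left and right multiplication swapped, which is exactly the kind of dualisation carried out in passing from Example \ref{eg:HCCgenDev} to Example \ref{eg:HCCgenDerRight}.  The key input is the Grivaux--Shkarin theorem applied to the adjoint: since $T^*$ is an extended backward shift on $X^*$, both $I + T^*$ and $e^{T^*}$ satisfy the Hypercyclicity Criterion on $X^*$, witnessed by a dense subspace $X_0^* \subset X^*$, an increasing sequence $(n_k)$ of positive integers and linear maps $S_{n_k} \colon X_0^* \to X^*$.

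The algebraic mechanism is the identity $(x^* \otimes x) \circ T = (T^* x^*) \otimes x$, which yields
\begin{equation*}
(L_I - R_{-T})^n (x^* \otimes x) = \left( (I + T^*)^n x^* \right) \otimes x,
\end{equation*}
and, using $I + (T')^* = e^{T^*}$, an analogous formula expressing $n$-fold iteration of the second generalised derivation as $(e^{T^*})^n$ acting on the first tensor factor.  Thus iteration of $L_I - R_{-T}$ and $L_I - R_{T'}$ on rank one operators is converted into iteration of $I + T^*$ and $e^{T^*}$ on vectors in $X^*$.

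I would take $X_0^* \otimes X \subset J$ as the dense subspace on which the Criterion is verified; density follows from admissibility of $J$ together with the estimate $\lVert x_n^* \otimes x - x^* \otimes x \rVert_J = \lVert x_n^* - x^* \rVert \lVert x \rVert$ and linearity.  The auxiliary maps $R_{S_{n_k}} \colon X_0^* \otimes X \to X^* \otimes X$ are defined by $R_{S_{n_k}}(x^* \otimes x) = (S_{n_k} x^*) \otimes x$, with $S_{n_k}$ chosen from the Criterion for $I + T^*$ or $e^{T^*}$ as appropriate.  Each of conditions \ref{eq:hcc1}--\ref{eq:hcc3} on a rank one tensor $x^* \otimes x$ then reduces, after factoring out the constant $\lVert x \rVert$, to the corresponding condition satisfied on $x^* \in X_0^*$; the verification is then extended from rank one operators to the whole of $X_0^* \otimes X$ by linearity, exactly as in Example \ref{eg:HCCgenDerRight}.

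I do not expect any real obstacle: the argument is structurally identical to Theorem \ref{thm:extBwShiftHc}, with the only substantive change being that the right action of $T$ (respectively $T'$) on a rank one operator is rewritten as the left action of its adjoint on the dual factor.  The only step requiring a moment of care is reconciling signs so that one lands on $I + T^*$ and $e^{T^*} = I + (T')^*$ after the adjoint rewrite, paralleling how the sign in $R_{-I}$ was arranged in the previous theorem.
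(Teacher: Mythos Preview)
Your proposal is correct and follows exactly the approach the paper intends: the paper gives no explicit proof of this theorem but simply states that Example~\ref{eg:HCCgenDerRight} extends to it in the same way that Theorem~\ref{thm:extBwShiftHc} generalises Example~\ref{eg:HCCgenDev}, and the dualisation you carry out (Grivaux--Shkarin applied to $T^*$, dense subspace $X_0^* \otimes X$, auxiliary maps $R_{S_{n_k}}$ acting on the first tensor factor) is precisely that extension. Your remark that the only point needing care is the sign bookkeeping to land on $I + T^*$ and $e^{T^*} = I + (T')^*$ is well taken.
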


We note that the preceding examples of hypercyclic generalised derivations contained in Theorems \ref{thm:extBwShiftHc} and \ref{thm:extBwShiftAdjHc} are of a special and somewhat restricted type.  It would be of interest to identify instances of hypercyclic $\tau_{A,B}$ where  $A$ and $B$ are different from the identity map.  We include this problem in the list of open questions contained in Section \ref{sec:questions}.

In the next example we demonstrate that every separable admissible Banach ideal  supports hypercyclic generalised derivations.  
\begin{eg}  \label{eg:everyBanIdSupports}
Let $J \subset \mathscr{L}(X)$ be any separable admissible Banach ideal, where $X$ is an infinite dimensional separable Banach space.
We claim that $J$ supports a  generalised derivation that satisfies the Hypercyclicity Criterion.

We first recall a result of  Ansari and Bernal  that states  that $X$ supports an operator of the form $I + K \in \mathscr{L}(X)$ that satisfies the Hypercyclicity Criterion, where $I \in \mathscr{L}(X)$ is the identity operator and $K \in \mathscr{N}(X)$ is a nuclear operator (cf.~\cite[Remark 2.12]{BM09}).  Using a similar argument to Theorem \ref{thm:extBwShiftHc},  it follows that   $J \subset \mathscr{L}(X)$ admits a  generalised derivation of the form $L_K - R_{-I}$ that satisfies the Hypercyclicity Criterion.
\hfill \qedsymbol
\end{eg}

We further observe below in  Remark \ref{eg:manyHcGD} that once there exists an instance of a hypercyclic generalised derivation, then by a standard construction there exists further examples.  To do this we require the notion of quasi-factors and the hypercyclic comparison principle.
For  topological spaces $X_0$ and $X$, we say that the map $T \colon X \to X$  is a \emph{quasi-factor} of $T_0 \colon X_0 \to X_0$ if there exists a continuous map with dense range $\Psi \colon X_0 \to X$ such that $T\Psi = \Psi T_0$, that is the following diagram commutes.
\begin{equation*}
\xymatrix@C+1.5em{ 					
X_0 \ar[r]^{T_0} \ar[d]_\Psi & X_0 \ar[d]^\Psi\\
X \ar[r]^T & X
}
\end{equation*}
When $T_0$ and $T$ are linear and $\Psi$ can be taken as linear we say $T$ is a \emph{linear quasi-factor} of $T_0$.

The \emph{hypercyclic comparison principle}  states that hypercyclicity is preserved by quasi-factors and that linear quasi-factors preserve the Hypercyclicity Criterion and supercyclicity (cf.\ for instance \cite[Section 1.1.1]{BM09}).

\begin{rmk} \label{eg:manyHcGD}
Suppose  there exists a pair $(A,B)$, where $A,B \in \mathscr{L}(X)$, such that the generalised derivation $L_A - R_B$ is hypercyclic on a separable Banach ideal $J \subset \mathscr{L}(X)$.  
Then new examples of hypercyclic generalised derivations can be obtained by applying the hypercyclic comparison principle.

To see this, we consider invertible $U,V \in \mathscr{L}(X)$ and note that the generalised derivation $L_{U^{-1} A U} - R_{VBV^{-1}} \colon J \to J$  is a linear quasi-factor of $L_A - R_B$ via the following commuting diagram.
\begin{equation*}
\xymatrix@C+7em@R+1em{
J \ar[r]^{L_A - R_B} \ar[d]_{L_{U^{-1}} R_{V^{-1}}} & J \ar[d]^{L_{U^{-1}} R_{V^{-1}}}  \\
J \ar[r]^{L_{U^{-1}AU} - R_{VBV^{-1}}} & J			
}
\end{equation*}
Hence it follows by the hypercyclic comparison principle that $L_{U^{-1} A U} - R_{VBV^{-1}}$ is hypercyclic on $J$ and thus we easily obtain further instances of hypercyclic generalised derivations.
\end{rmk}

\section{Classes of Non-Hypercyclic Derivations} \label{sec:NonHcGD}

In this section we isolate large classes of operators in the opposite direction that do not induce hypercyclic generalised derivations.
We begin by identifying an elementary spectral condition which is related to the  well known fact that the adjoint of a hypercyclic operator has no eigenvalues (cf.~\cite[Proposition 1.17]{BM09}).

\begin{prop} \label{prop:ptSpecNonEmpty}
Let $X$ be a Banach space and $A, B \in \mathscr{L}(X)$.  If the point spectra $\ptSpec{A^*}$ and $\ptSpec{B}$ are both nonempty then the generalised derivation $\tau_{A,B}$ is not hypercyclic on any separable Banach ideal $J \subset \mathscr{L}(X)$.
\end{prop}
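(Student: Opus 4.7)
The plan is to exhibit an explicit eigenvector for the Banach space adjoint $\tau_{A,B}^{*} \colon J^{*} \to J^{*}$, and then invoke the standard fact that the adjoint of a hypercyclic operator has empty point spectrum (\cite[Proposition 1.17]{BM09}). The pair $(\sigma_\mathrm{p}(A^{*}), \sigma_\mathrm{p}(B))$ furnishes exactly the data needed to build such an eigenvector.

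To set up, I would fix $\alpha \in \sigma_\mathrm{p}(A^{*})$ and $\beta \in \sigma_\mathrm{p}(B)$, together with nonzero vectors $x^{*} \in X^{*}$ and $y \in X$ satisfying $A^{*} x^{*} = \alpha x^{*}$ and $By = \beta y$. The natural scalar I can form from these two pieces of data is the evaluation against the operator variable, so I define $\varphi \colon J \to \mathbb{C}$ by
\begin{equation*}
\varphi(T) = \langle x^{*}, Ty \rangle.
\end{equation*}
Axiom \ref{defn:banIdealii} of a Banach ideal immediately gives $|\varphi(T)| \leq \lVert x^{*} \rVert \lVert y \rVert \lVert T \rVert \leq \lVert x^{*} \rVert \lVert y \rVert \lVert T \rVert_{J}$, so $\varphi \in J^{*}$.

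The central step is the computation
\begin{align*}
(\tau_{A,B}^{*} \varphi)(T) &= \varphi(AT - TB) = \langle x^{*}, ATy \rangle - \langle x^{*}, T(By) \rangle \\
&= \langle A^{*} x^{*}, Ty \rangle - \beta \langle x^{*}, Ty \rangle = (\alpha - \beta)\, \varphi(T),
\end{align*}
which shows $\varphi$ is an eigenvector of $\tau_{A,B}^{*}$ with eigenvalue $\alpha - \beta$, provided $\varphi \neq 0$. To rule out the trivial case I would use axiom \ref{defn:banIdealiv}: pick $z^{*} \in X^{*}$ with $\langle z^{*}, y \rangle \neq 0$ (via Hahn--Banach, using $y \neq 0$) and $w \in X$ with $\langle x^{*}, w \rangle \neq 0$ (since $x^{*} \neq 0$); then $z^{*} \otimes w \in J$ and $\varphi(z^{*} \otimes w) = \langle z^{*}, y \rangle \langle x^{*}, w \rangle \neq 0$.

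Having produced a nonzero element of $\sigma_\mathrm{p}(\tau_{A,B}^{*})$, the proof would close by contradiction: hypercyclicity of $\tau_{A,B}$ on the separable Banach ideal $J$ forces $\sigma_\mathrm{p}(\tau_{A,B}^{*}) = \emptyset$, which is incompatible with $\tau_{A,B}^{*} \varphi = (\alpha - \beta) \varphi$. There is no real obstacle in the argument; the only subtlety worth flagging is that the axioms of an admissible Banach ideal are used twice in an essential way, once (via \ref{defn:banIdealii}) to ensure $\varphi$ is continuous on $(J, \lVert \, \cdot \, \rVert_{J})$, and once (via \ref{defn:banIdealiv}) to produce a rank-one witness showing $\varphi$ is not identically zero.
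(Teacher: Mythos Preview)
Your argument is correct and follows essentially the same route as the paper: define the functional $\varphi(T)=\langle x^{*},Ty\rangle$, verify its boundedness via the ideal inequality, compute $\tau_{A,B}^{*}\varphi=(\alpha-\beta)\varphi$, and invoke the empty-point-spectrum criterion for the adjoint of a hypercyclic operator. If anything, you are slightly more careful than the paper in explicitly checking $\varphi\neq 0$ via a rank-one witness; the only minor slip is terminological---axioms \ref{defn:banIdealii} and \ref{defn:banIdealiv} are part of the definition of a Banach ideal, not of admissibility, and indeed the proposition does not require admissibility.
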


\begin{proof}
By assumption we have that $A^* x^* = \alpha x^*$ and $B x = \beta x$ for some eigenvalues $\alpha, \beta \in \mathbb{C}$  corresponding to nonzero eigenvectors $x \in X$, $x^* \in X^*$. 
We will show the adjoint  $\tau^*_{A,B} \colon J^* \to J^*$ has a nonempty point spectrum.

We define the linear functional $\varphi \in J^*$  by $\varphi(T) = \langle x^*, Tx \rangle$, where $T \in J$ and we recall that it is bounded  since for any $T \in J$
\begin{align*}
\lvert \varphi(T) \rvert = \lvert \langle x^*, Tx \rangle \rvert \leq \lVert x^* \rVert \left\lVert x \right\rVert \lVert T \rVert \leq \lVert x^* \rVert \left\lVert x \right\rVert \lVert T \rVert_J.
\end{align*}
We note for any $T \in J$ that
\begin{align*}
\langle \tau^*_{A,B} \left( \varphi \right), T \rangle &= \langle \varphi, AT - TB \rangle = \varphi(AT) - \varphi(TB) \\
&= \langle A^* x^*, Tx \rangle - \langle x^*, TBx \rangle = \alpha \langle  x^*, Tx \rangle - \beta \langle x^*,  Tx \rangle \\
&= \alpha \varphi(T) - \beta \varphi(T) = (\alpha - \beta) \langle \varphi, T \rangle.
\end{align*}
So $\alpha - \beta$ is an eigenvalue for $\tau^*_{A,B}$ 
and $\tau_{A,B}$ is not hypercyclic on $J$.
\end{proof}

We can apply Proposition~\ref{prop:ptSpecNonEmpty}, for instance, to the   generalised derivation $\tau_{S,B} \in \mathscr{L}(J)$, where $X= c_0$ or $\ell^p$ for $1 < p < \infty$, $J \subset \mathscr{L}(X)$ is a separable Banach ideal and  $B, S \in \mathscr{L}(X)$ are, respectively,  the backward  and forward shifts.
Recall that the adjoint of the forward shift is $S^* = B$ and the point spectrum of $B$ is the open unit disc $\mathbb{D}$.  Hence both $\ptSpec{B}$ and  $\ptSpec{S^*}$ are nonempty and it follows from Proposition~\ref{prop:ptSpecNonEmpty} that  $\tau_{S,B}$ is not hypercyclic on $J$.

Next we recall the notion of supercyclicity which is required in the following remark.
For a separable Banach space $X$, we say $T \in \mathscr{L}(X)$ is  \emph{supercyclic}  if there exists $x \in X$ such that its projective orbit under $T$ is dense in $X$, that is
\begin{equation*}
\overline{ \left\lbrace \lambda T^n x : n \geq 0, \: \lambda \in \mathbb{C} \right\rbrace } = X.
\end{equation*}
We note that the class of hypercyclic operators is strictly contained in the class of supercyclic operators (cf.~\cite[Example 1.15]{BM09}).  

\begin{rmk} \label{rmk:ptSpecSc}
Recall that  Herrero~\cite{Her91} proved if $T \in \mathscr{L}(X)$ is supercyclic then either $\ptSpec{T^*} = \varnothing$ or $\ptSpec{T^*} = \{ \lambda \}$ for some $\lambda \neq 0$ (cf.~\cite[Proposition 1.26]{BM09}).  Hence if either $A^*$ or $B$ from Proposition~\ref{prop:ptSpecNonEmpty} has more than one eigenvalue then $\tau_{A,B}$ is not even supercyclic on $J$.
\end{rmk}

Next we extend Proposition~\ref{prop:ptSpecNonEmpty}  to the more general class of elementary operators.
We recall for a Banach space $X$, the map  $\mathcal{E}_{A,B} \colon \mathscr{L}(X) \to \mathscr{L}(X)$ is an \emph{elementary operator}  if
\begin{equation*}
\mathcal{E}_{A,B} = \sum_{j=1}^n L_{A_{j}} R_{B_{j}}
\end{equation*}
where $A = (A_1,\ldots ,A_n)$, $B = (B_1,\ldots , B_n) \in \mathscr{L}(X)^n$ are fixed $n$-tuples given by  $A_j \in \mathscr{L}(X)$ and $B_j \in \mathscr{L}(X)$ for $j = 1, \dotsc, n$.

\begin{prop}  \label{prop:elopEvects}
Let $X$ be a Banach space and $A, B \in \mathscr{L}(X)^n$ for $n \geq 1$.  If the operators $A_j^*$ have eigenvalues sharing a common eigenvector and the operators $B_j$ have eigenvalues sharing a common eigenvector, for $1 \leq j \leq n$, then the  elementary operator $\mathcal{E}_{A,B}$ is not hypercyclic on any separable Banach ideal $J \subset \mathscr{L}(X)$.
\end{prop}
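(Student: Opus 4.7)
The plan is to mirror the proof of Proposition \ref{prop:ptSpecNonEmpty}: exhibit a nonzero eigenvector of the adjoint $\mathcal{E}_{A,B}^* \colon J^* \to J^*$ and invoke the standard fact that a hypercyclic operator has no adjoint eigenvalues.

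By hypothesis, there is a common nonzero $x^* \in X^*$ with $A_j^* x^* = \alpha_j x^*$ and a common nonzero $x \in X$ with $B_j x = \beta_j x$ for each $1 \leq j \leq n$. The natural candidate functional is $\varphi \in J^*$ defined by $\varphi(T) = \langle x^*, Tx \rangle$, whose boundedness on $(J, \lVert \cdot \rVert_J)$ follows exactly as in the proof of Proposition \ref{prop:ptSpecNonEmpty} using property \ref{defn:banIdealii} of a Banach ideal.

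The key computation is then, for arbitrary $T \in J$,
\begin{align*}
\langle \mathcal{E}_{A,B}^*(\varphi), T \rangle &= \varphi\!\left( \sum_{j=1}^n A_j T B_j \right) = \sum_{j=1}^n \langle x^*, A_j T B_j x \rangle \\
&= \sum_{j=1}^n \langle A_j^* x^*, T(B_j x) \rangle = \sum_{j=1}^n \alpha_j \beta_j \langle x^*, Tx \rangle = \left( \sum_{j=1}^n \alpha_j \beta_j \right) \varphi(T),
\end{align*}
so $\varphi$ is an eigenvector of $\mathcal{E}_{A,B}^*$ with eigenvalue $\sum_{j=1}^n \alpha_j \beta_j$. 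Consequently $\ptSpec{\mathcal{E}_{A,B}^*} \neq \varnothing$, which rules out hypercyclicity of $\mathcal{E}_{A,B}$ on any separable Banach ideal $J \subset \mathscr{L}(X)$.

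I do not anticipate a real obstacle here: the only thing to be careful about is checking that $\varphi$ is nonzero and well-defined as an element of $J^*$ (the latter uses $\lVert T \rVert \leq \lVert T \rVert_J$ from the Banach ideal axioms), and that the eigenvector computation genuinely requires the \emph{common} eigenvector hypothesis on each side---without it, the terms in the sum would not collapse to a scalar multiple of $\varphi(T)$. The rest is a direct generalisation of the case $n=1$, $A_1 = A$, $B_1 = -B$ recovered from Proposition \ref{prop:ptSpecNonEmpty}.
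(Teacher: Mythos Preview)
Your argument is correct and matches the paper's proof essentially line for line: define $\varphi(T)=\langle x^*,Tx\rangle$ using the common eigenvectors, verify it lies in $J^*$, and compute $\mathcal{E}_{A,B}^*(\varphi)=\big(\sum_{j=1}^n\alpha_j\beta_j\big)\varphi$ to obtain an adjoint eigenvalue. The only extra care you flag---that $\varphi\neq 0$---is indeed handled by property~\ref{defn:banIdealiv} of a Banach ideal (choose a suitable rank-one operator), which the paper leaves implicit.
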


\begin{proof}
By assumption we have $A^*_j x^* = \alpha_j x^*$ and $B_j x = \beta_j x$ for eigenvalues $\alpha_j, \beta_j \in \mathbb{C}$  corresponding to nonzero eigenvectors $x \in X$ and $x^* \in X^*$, for $1 \leq j \leq n$.

As in the proof of Proposition~\ref{prop:ptSpecNonEmpty}, we define the continuous linear functional $\varphi \in J^*$ by $\varphi(T) = \langle x^*, Tx \rangle$, where $T \in J$.
Notice for the adjoint $\mathcal{E}_{A,B}^* \colon J^* \to J^*$ and any $T \in J$ that 
\begin{equation*}
\langle \mathcal{E}_{A,B}^* \left( \varphi \right), T \rangle =  \sum_{j=1}^n \varphi\left( A_j T B_j \right) =  \sum_{j=1}^n \langle A_j^* x^*,  T B_j x\rangle = \sum_{j=1}^n \alpha_j \beta_j \langle \varphi,  T \rangle.
\end{equation*}
Hence $\sum_{j=1}^n \alpha_j \beta_j$ is an eigenvalue of $\mathcal{E}^*_{A,B}$ 
 and $\mathcal{E}_{A,B}$ is not hypercyclic on $J$.
\end{proof}

To illustrate Proposition~\ref{prop:elopEvects} let $X= c_0$ or $\ell^p$ for $1 < p < \infty$.
We consider the separable Banach ideal $J \subset \mathscr{L}(X)$ and the elementary operator  
\begin{equation*}
\mathcal{E}_{U,V} \colon J \to J
\end{equation*}
where $U = (S, I, S^2)$, $V = (I, B, B^2) \in \mathscr{L}(X)^3$ and $B, S \in \mathscr{L}(X)$ are the  backward  and forward shift operators.
A common eigenvector for the 3-tuple $V$ is easily obtained,  we choose $\beta \in \mathbb{D}$ which gives $x = (1, \beta, \beta^2, \dotsc) \in X$ such that $Bx = \beta x$.  Hence $1, \beta,\beta^2$ are, respectively, eigenvalues for $I, B, B^2$.  Similarly we choose $\alpha \in \mathbb{D}$ which gives the eigenvector $x^* = (1, \alpha, \alpha^2, \dotsc) \in X^*$ such that $\alpha, 1, \alpha^2$ are the corresponding eigenvalues for $S^*, I, S^{* \, 2}$.  So it follows from Proposition~\ref{prop:elopEvects} that $\mathcal{E}_{U,V}$ is not hypercyclic on $J$.

Next we extend some observations made in \cite{GST16} concerning commutator maps  induced by Riesz operators   to the generalised derivation case.  
We recall that $T \in \mathscr{L}(X)$ is a \emph{Riesz} operator if its essential spectrum $\sigma_{\mathrm{e}}(T) = \{ 0 \}$ and they are never hypercyclic~\cite[p.\ 160]{GEP11}.
The spectrum of $T$ is $\sigma(T) = \{ 0\} \cup \{ \lambda_n : n \geq 1 \}$, where $\{ \lambda_n : n \geq 1 \}$ is a discrete, at most countable (possibly empty) set containing nonzero eigenvalues of finite multiplicity. 

The spectrum of the generalised derivation $\tau_{A,B}$  acting on $\mathscr{L}(X)$ was initially identified by Lumer and Rosenblum~\cite{LR59}. 
Their formula extends to $\tau_{A,B}$ restricted to any Banach ideal $J \subset \mathscr{L}(X)$ and
satisfies 
\begin{equation*}     
\sigma_J(\tau_{A,B}) = \sigma(A) - \sigma(B)
\end{equation*}
where $\sigma_J(\tau_{A,B})$ denotes the spectrum of $\tau_{A,B} \colon J \to J$.
Further details can be found, for instance, in the survey~\cite[Theorem 3.12]{ST06}.

We also recall Kitai's~\cite{Kit82} spectral condition that every connected component of the spectrum of a hypercyclic operator intersects the unit circle (cf.~\cite[Theorem 1.18]{BM09}).
\begin{thm}  \label{thm:derivRieszNotHc}
Let $X$ be a Banach space and $J \subset \mathscr{L}(X)$ be a separable Banach ideal.  If $A$, $B \in \mathscr{L}(X)$ are Riesz operators then the induced generalised derivation $\tau_{A,B} \colon J \to J$
is not hypercyclic.
\end{thm}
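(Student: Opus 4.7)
The plan is to apply Kitai's spectral condition: every connected component of the spectrum of a hypercyclic operator must meet the unit circle. So it suffices to exhibit one connected component of $\sigma_J(\tau_{A,B})$ that misses the unit circle. We may assume $X$ is infinite-dimensional, since otherwise $J$ is finite-dimensional and admits no hypercyclic operator at all.

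The Lumer-Rosenblum spectral formula recalled just before the theorem statement gives
\begin{equation*}
\sigma_J(\tau_{A,B}) = \sigma(A) - \sigma(B).
\end{equation*}
As noted above, the spectrum of a Riesz operator on an infinite-dimensional Banach space has the form $\{0\} \cup \{\lambda_n : n \geq 1\}$, where the nonzero eigenvalues form an at most countable set accumulating only at $0$. Thus $\sigma(A)$ and $\sigma(B)$ are both at most countable, and hence their Minkowski difference $\sigma_J(\tau_{A,B})$ is likewise at most countable. In particular $0 = 0 - 0$ lies in this spectrum.

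The decisive observation is that any countable subset of $\mathbb{C}$ is totally disconnected. Indeed, if a connected subset $C \subset \mathbb{C}$ contained two distinct points $a \neq b$, then the continuous map $z \mapsto |z - a|$ would send $C$ onto a connected subset of $\mathbb{R}$ containing $0$ and $|a - b| > 0$, hence onto a nondegenerate interval, which is uncountable. Consequently every connected component of $\sigma_J(\tau_{A,B})$ is a singleton, and in particular $\{0\}$ is a connected component that does not intersect the unit circle. Kitai's theorem then forces $\tau_{A,B}$ to be non-hypercyclic on $J$. I do not expect any substantial obstacle here: the whole argument rests on the two spectral facts cited just before the theorem together with the elementary total-disconnectedness remark, and the only minor point to check, that $0$ lies in the spectrum, is automatic in the infinite-dimensional setting.
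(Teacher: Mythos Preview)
Your proof is correct and follows essentially the same route as the paper: both invoke the Lumer--Rosenblum formula $\sigma_J(\tau_{A,B}) = \sigma(A) - \sigma(B)$, note that this set is at most countable and contains $0$, deduce that $\{0\}$ is a connected component disjoint from the unit circle, and finish with Kitai's spectral condition. Your version is in fact slightly more careful: the paper asserts the spectrum is ``discrete,'' which is not quite right (a countable compact set such as $\{0\}\cup\{1/n:n\geq 1\}$ need not be discrete), whereas your total-disconnectedness argument is the correct way to conclude that every component is a singleton.
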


\begin{proof}
The spectrum of $\tau_{A,B}$ on $J$ is given by
\begin{align*}
\sigma_J(\tau_{A,B}) &= \sigma(A) - \sigma(B) \\
&= \left\lbrace \alpha_m - \beta_n :   \alpha_m \in \sigma(A),\, \beta_n \in \sigma(B),\, m,n \geq 0 \right\rbrace
\end{align*}
where for convenience we set $\alpha_0 = 0 = \beta_0$.  

Note that $\sigma_J(\tau_{A,B})$ is a closed and compact set which is at most countable.
So it is a discrete set containing the singleton $\{ 0 \}$ as a connected component and it follows from the spectral condition of Kitai that $\tau_{A,B}$ is not hypercyclic on $J$.
\end{proof}
We remark that   compact operators are an important class of Riesz operators and hence it follows by Theorem~\ref{thm:derivRieszNotHc} that $\tau_{A,B}$ is not hypercyclic on any separable Banach ideal $J \subset \mathscr{L}(X)$ for compact $A, B \in \mathscr{K}(X)$.

\section{Normal Derivations} \label{sec:NormalGD}

The Hilbert space setting provides interesting  families of non-hypercyclic operators and in this section we extend some observations made in \cite{GST16} to the generalised derivation case.
Here $H$ denotes an infinite dimensional Hilbert space over the complex field.

Generalised derivations     induced by normal operators and  acting on $\mathscr{L}(H)$, or \emph{normal derivations},  
 were first studied by Anderson~\cite{And73b} and Anderson and Foia{\c{s}}~\cite{AF75}.  
The Banach ideal case was then investigated by Maher~\cite{Mah92} and Kittaneh~\cite{Kit95}.

We recall that $T \in \mathscr{L}(H)$ is  \emph{positive}, denoted $T \geq 0$, if the inner product $\langle Tx , x \rangle \geq 0$ for all $x \in H$.
 We say $T \in \mathscr{L}(H)$ is \emph{hyponormal} if $T^* T - T T^* \geq 0$  or equivalently if $\left\lVert Tx  \right\rVert \geq \left\lVert T^* x \right\rVert$ for all $x \in H$.
The class of hyponormal operators contains some well known classes of operators such as the subnormal, normal and self-adjoint operators~\cite{Hal82}. 
Kitai~\cite{Kit82} showed hyponormal operators are never hypercyclic 
and Bourdon~\cite{Bou97} proved that they are never even supercyclic.

We consider generalised derivations induced by hyponormal operators acting on the space of Hilbert-Schmidt operators $C_2$, which is complete in the Hilbert-Schmidt norm 
\begin{equation*}
\lVert T \rVert^2_2 = \tr{T^*T} = \sum_j \langle T^* T e_j, e_j \rangle = \sum_j \langle Te_j, Te_j \rangle = \sum_j \lVert Te_j \rVert^2
\end{equation*}
where $T \in C_2$, $\tr{T}$ denotes the trace of $T$ and $\left( e_j \right)$ is any orthonormal basis of $H$.  It is a Hilbert space with the corresponding inner product 
\begin{equation*}
\langle S, T \rangle = \tr{T^* S}
\end{equation*}
and further details on Hilbert-Schmidt operators can be found in \cite{Con00}.

We note that a characterisation of hyponormal generalised derivations is stated, without proof, in \cite[p.\ 50]{Con91}.  In the proof of Theorem~\ref{thm:hnNeverHc} below we  recall the part we need for the convenience of the reader.
\begin{thm}  \label{thm:hnNeverHc}
Let $A, B \in \mathscr{L}(H)$ be such that $A$ and $B^*$ are hyponormal.  Then the generalised derivation $\tau_{A,B} \colon C_2 \to C_2$ is not supercyclic.
\end{thm}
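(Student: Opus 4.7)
The plan is to establish that $\tau_{A,B}$ is itself a hyponormal operator on the Hilbert space $C_2$ and then invoke Bourdon's result, cited in the paragraph before the statement, that hyponormal operators are never supercyclic. So the real work is verifying the operator inequality $\tau_{A,B}^*\tau_{A,B} - \tau_{A,B}\tau_{A,B}^* \geq 0$ on $C_2$, exploiting the hypotheses that $A^*A - AA^* \geq 0$ and $BB^* - B^*B \geq 0$.

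First I would compute the Hilbert-Schmidt adjoint of $\tau_{A,B}$. Using the inner product $\langle S, T \rangle = \tr{T^*S}$ together with the cyclicity of the trace, a short calculation gives $\langle \tau_{A,B}(S), T \rangle = \langle S, A^*T - TB^* \rangle$, so $\tau_{A,B}^* = \tau_{A^*,B^*}$ on $C_2$. The hyponormality of $\tau_{A,B}$ is then equivalent to the scalar inequality $\|AT - TB\|_2^2 \geq \|A^*T - TB^*\|_2^2$ for every $T \in C_2$.

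Next I would expand both squared norms using $\|\,\cdot\,\|_2^2 = \tr{(\,\cdot\,)^*(\,\cdot\,)}$ and repeatedly cycle the trace. The diagonal contributions combine to
\begin{equation*}
\tr{(A^*A - AA^*)TT^*} + \tr{(BB^* - B^*B)T^*T},
\end{equation*}
and since each factor in these two traces is a positive operator, each trace is non-negative (using $\tr{PQ} = \tr{P^{1/2}QP^{1/2}} \geq 0$ when $P,Q \geq 0$). The remaining cross terms take the form $-2\mathrm{Re}\bigl(\tr{B^*T^*AT} - \tr{BT^*A^*T}\bigr)$.

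The one non-routine point, and likely the main obstacle, is handling these cross terms. The key observation is that $\tr{BT^*A^*T}$ is the complex conjugate of $\tr{B^*T^*AT}$, as one sees by taking the adjoint inside the trace; consequently the difference is purely imaginary and its real part vanishes. Putting everything together yields $\|\tau_{A,B}(T)\|_2^2 - \|\tau_{A,B}^*(T)\|_2^2 \geq 0$, so $\tau_{A,B}$ is hyponormal on the Hilbert space $C_2$, and Bourdon's theorem then rules out supercyclicity.
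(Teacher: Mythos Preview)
Your proof is correct and follows the same strategy as the paper: establish that $\tau_{A,B}$ is hyponormal on the Hilbert space $C_2$ and then invoke Bourdon's theorem. The paper's execution is slightly slicker in that it computes the self-commutator at the operator level, where the cross terms cancel immediately from the identity $L_S R_T = R_T L_S$, giving $\tau_{A,B}^*\tau_{A,B} - \tau_{A,B}\tau_{A,B}^* = L_{A^*A-AA^*} + R_{BB^*-B^*B}$ directly; your trace-level expansion reaches the same endpoint but needs the extra conjugation/cyclicity step to see the cross terms vanish.
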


\begin{proof}
We first observe that 
\begin{align*}
\tau_{A,B}^* \tau_{A,B} - \tau_{A,B} \tau_{A,B}^* &= \left( L_{A^*} - R_{B^*} \right) \left( L_A - R_B \right) - \left( L_A - R_B \right) \left( L_{A^*} - R_{B^*} \right) \\
&= L_{A^* A} - L_{A^*} R_B - R_{B^*} L_A + R_{B B^*}  \nonumber \\
& \qquad {} - L_{A A^*} + L_A R_{B^*} + R_B L_{A^*} - R_{B^* B} \label{line:hypoComm}\\
&= L_{A^*A - AA^*} + R_{BB^* - B^*B}
\end{align*}
where above we have used the facts that $L_S R_T = R_T L_S$ and $R_S R_T = R_{TS}$ for any $S, T \in \mathscr{L}(H)$.

For any $T \in C_2$ notice that
\begin{align*}
\left\langle L_{A^*A - AA^*}\left( T \right), T \right\rangle = \left\langle A^*AT - AA^*T, T \right\rangle &= \left\langle AT, AT \right\rangle - \left\langle A^*T, A^*T \right\rangle \\
&= \left\lVert AT \right\rVert_2^2 - \lVert A^*T \rVert^2_2.
\end{align*}
Since $A$ is hyponormal we know that $\lVert ATe_n \rVert \geq \left\lVert A^*Te_n \right\rVert$ for all $n \geq 1$ and any orthonormal basis $\left( e_n \right)$ of $H$.  Hence
\begin{equation*}
\lVert AT \rVert_2^2 = \sum_n \left\lVert ATe_n \right\rVert^2 \geq \sum_n   \left\lVert A^*Te_n \right\rVert^2 = \left\lVert A^*T \right\rVert^2_2
\end{equation*}
and hence $\left\lVert AT \right\rVert_2^2 - \left\lVert A^*T \right\rVert^2_2 \geq 0$ for all $T \in C_2$.

Similarly for any $T \in C_2$
\begin{equation*}
\left\langle R_{BB^* - B^*B} \left( T \right), T \right\rangle = \left\langle B^*T , B^*T \right\rangle -  \left\langle BT, BT \right\rangle = \left\lVert B^*T \right\rVert_2^2 - \left\lVert BT \right\rVert^2_2
\end{equation*}
and  it follows from the hyponormality of $B^*$ that $\lVert B^*T \rVert_2^2 - \lVert BT \rVert^2_2 \geq 0$.

Hence $\tau_{A,B}^* \tau_{A,B} - \tau_{A,B} \tau_{A,B}^* \geq 0$ so $\tau_{A,B}$ is hyponormal and cannot be supercyclic on $C_2$ by \cite{Bou97}.
\end{proof}

Next we extend Theorem~\ref{thm:hnNeverHc}   using the hypercyclic comparison principle.
\begin{cor}
Let $A,B \in  \mathscr{L}(H)$ be such that $A$ and $B^*$ are hyponormal and let $J$ be an admissible Banach ideal contained in $C_2$.  Then the induced generalised derivation  $\tau_{A,B}$ is never supercyclic on $J$.
\end{cor}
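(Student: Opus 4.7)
The plan is to invoke the hypercyclic comparison principle for supercyclicity with the natural inclusion $\iota \colon J \hookrightarrow C_2$ as the intertwining map. Since $\iota$ is the set-theoretic inclusion, the relation $\tau_{A,B} \circ \iota = \iota \circ \tau_{A,B}|_J$ is automatic, so it suffices to verify that $\iota$ is a continuous linear map with dense range. Once this is established, $\tau_{A,B} \colon C_2 \to C_2$ becomes a linear quasi-factor of $\tau_{A,B} \colon J \to J$, and since supercyclicity is preserved by linear quasi-factors, a hypothetical supercyclic vector for $\tau_{A,B}$ on $J$ would transport to one on $C_2$, contradicting Theorem~\ref{thm:hnNeverHc}.

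For continuity of $\iota$, I would apply the closed graph theorem. Suppose $(S_n) \subset J$ with $\lVert S_n - S \rVert_J \to 0$ and $\lVert S_n - T \rVert_2 \to 0$ for some $S \in J$ and $T \in C_2$. By axiom \ref{defn:banIdealii} of a Banach ideal we have $\lVert S_n - S \rVert \leq \lVert S_n - S \rVert_J \to 0$, while the standard estimate $\lVert \cdot \rVert \leq \lVert \cdot \rVert_2$ on $C_2$ gives $\lVert S_n - T \rVert \to 0$. Uniqueness of limits in the operator norm forces $S = T$, so $\iota$ has closed graph and is therefore bounded.

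For the density of $\iota(J)$ in $C_2$, I would use admissibility. Axioms \ref{defn:banIdealiii} and \ref{defn:banIdealiv} give $\mathscr{F}(H) \subset J$, so $\iota(J) \supseteq \mathscr{F}(H)$; since $\mathscr{F}(H)$ is well-known to be $\lVert \cdot \rVert_2$-dense in $C_2$, the range of $\iota$ is dense. Combining the two properties and appealing to the hypercyclic comparison principle completes the argument.

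The only subtle point is the continuity of the inclusion, which is not part of the definition of an admissible Banach ideal but comes for free from the closed graph theorem because both $\lVert \cdot \rVert_J$ and $\lVert \cdot \rVert_2$ dominate the ambient operator norm. Everything else is bookkeeping on top of Theorem~\ref{thm:hnNeverHc}.
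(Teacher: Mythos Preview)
Your proposal is correct and follows essentially the same route as the paper: both use the inclusion $J \hookrightarrow C_2$ as the intertwining map, observe that $\mathscr{F}(H) \subset J$ is $\lVert \cdot \rVert_2$-dense in $C_2$, and then invoke the comparison principle together with Theorem~\ref{thm:hnNeverHc}. The paper simply asserts the existence of the continuous inclusion with dense range, whereas you supply the continuity via the closed graph theorem; this is a welcome clarification but not a different argument.
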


\begin{proof}
The finite rank operators are contained in $J$ and they form a dense subset of $C_2$. This gives a natural linear inclusion, with dense range, $\Psi \colon J \rightarrow C_2$.
Hence $\tau_{A,B} \colon C_2 \to C_2$ is a linear quasi-factor of $\tau_{A,B} \colon J \to J$ via the following commuting diagram.
\begin{equation*}
\xymatrix@C+2em{ 
J \ar[r]^{\tau_{A,B}} \ar[d]_\Psi & J \ar[d]^\Psi\\
C_2 \ar[r]^{\tau_{A,B}} & C_2
}
\end{equation*}  
If $\tau_{A,B}$ was supercyclic on $J$ then it would follow by the comparison principle that it is supercyclic on $C_2$.  However we know from Theorem~\ref{thm:hnNeverHc} that $\tau_{A,B}$ is not supercyclic on $C_2$ and hence $\tau_{A,B}$ is not supercyclic on $J$.
\end{proof}

\section{Elementary Operators on the Argyros-Haydon Space} \label{sec:ElopArgHay}

Argyros and Haydon~\cite{AH11} resolved the famous \emph{scalar-plus-compact} problem with the construction of the extreme  Banach space $X_{AH}$.  While this type of  space is rare, $X_{AH}$ is relatively nice and possesses many remarkable properties.  In this section we reveal some surprising differences in the hypercyclic behaviour of elementary operators acting on  $\mathscr{L}(X_{AH})$ and $\mathscr{K}(X_{AH})$.

We first recall some   properties of $X_{AH}$ relevant to our discussion.  The space $X_{AH}$ has a Schauder basis and every $T \in \mathscr{L}(X_{AH})$ is of the form
\begin{equation}  \label{eq:operFormAH}
T = \lambda I + K
\end{equation}
where $\lambda \in \mathbb{C}$ and $K \in \mathscr{K}(X_{AH})$ is a compact operator.

The existence of a Schauder basis implies that $X_{AH}$ possesses the approximation property and it is  shown in~\cite{AH11} that the dual  $X_{AH}^*$ is isomorphic to the sequence space $\ell^1$ and  is therefore separable.
Hence the space of compact operators $\mathscr{K}(X_{AH})$ is a separable admissible Banach ideal under the operator norm topology and  it further follows that 
$\mathscr{L}(X_{AH}) = \mathbb{C} \cdot I + \mathscr{K}(X_{AH})$ is  separable under the operator norm topology.

The separability of $\mathscr{L}(X_{AH})$ naturally leads  to the question of whether it supports hypercyclic elementary operators.  
Using an argument similar to Saldivia~\cite[Proposition 5.8]{Sal03} and partially outlined in~\cite[p.~300]{GEP11} for the multiplier case, we establish a somewhat more general observation for  elementary operators acting on particular Banach algebras and we then apply it to  $\mathscr{L}(X_{AH})$.

For a Banach algebra $\mathscr{A}$, the elementary operator $\mathcal{E}_{a,b} \colon \mathscr{A} \to \mathscr{A}$ is given by 
\begin{equation*} 
\mathcal{E}_{a,b} = \sum_{j=1}^n L_{a_j} R_{b_j}
\end{equation*}
where $a = (a_1,\ldots ,a_n)$, $b = (b_1,\ldots , b_n) \in \mathscr{A}^n$, $n \geq 1$ and we define $L_{a_j}(s) = a_js$, $R_{b_j}(s) = sb_j$ for any $s \in \mathscr{A}$ and $j=1, \dotsc, n$.

We further recall that a \emph{multiplicative linear functional} $\varphi \colon \mathscr{A} \to \mathbb{C}$ is a nonzero linear functional such that $\varphi(ab) = \varphi(a)\varphi(b)$ for all $a, b \in \mathscr{A}$ and it is well known that they are always continuous.

\begin{thm} \label{thm:ElopNotHcBanAlgMulFnal}
Let $\mathscr{A}$ be a Banach algebra which admits a non-trivial multiplicative linear functional $\varphi \in \mathscr{A}^*$. Then the elementary operator $\mathcal{E}_{a,b} \colon \mathscr{A} \to \mathscr{A}$ is not hypercyclic.
\end{thm}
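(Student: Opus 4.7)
The plan is to mimic the approach of Propositions~\ref{prop:ptSpecNonEmpty} and~\ref{prop:elopEvects}: exhibit an eigenvector for the adjoint $\mathcal{E}_{a,b}^{*} \colon \mathscr{A}^{*} \to \mathscr{A}^{*}$, which immediately rules out hypercyclicity via the classical fact that the adjoint of a hypercyclic operator has empty point spectrum (\cite[Proposition 1.17]{BM09}). The natural candidate for such an eigenvector is the multiplicative linear functional $\varphi$ itself, viewed as an element of $\mathscr{A}^{*}$; recall that $\varphi$ is automatically continuous, so this makes sense.

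The key computation will be to evaluate $\mathcal{E}_{a,b}^{*}(\varphi)$ on an arbitrary $s \in \mathscr{A}$. Using the multiplicativity of $\varphi$, one obtains
\begin{equation*}
\langle \mathcal{E}_{a,b}^{*}(\varphi), s \rangle
= \varphi\!\left( \sum_{j=1}^{n} a_{j} s b_{j} \right)
= \sum_{j=1}^{n} \varphi(a_{j}) \varphi(s) \varphi(b_{j})
= \lambda \, \varphi(s),
\end{equation*}
where $\lambda := \sum_{j=1}^{n} \varphi(a_{j}) \varphi(b_{j}) \in \mathbb{C}$. Since $\varphi$ is non-trivial, it is a nonzero element of $\mathscr{A}^{*}$, so this exhibits $\varphi$ as an eigenvector of $\mathcal{E}_{a,b}^{*}$ with eigenvalue $\lambda$.

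The conclusion follows from the standard observation that whenever $\mathcal{E}_{a,b}^{*}$ has a nonempty point spectrum, $\mathcal{E}_{a,b}$ cannot be hypercyclic (regardless of the value $\lambda$, and independently of separability: if $\mathscr{A}$ is non-separable hypercyclicity fails trivially, and otherwise the eigenvector argument applies). There is essentially no obstacle here beyond verifying the multiplicative identity; the argument is a direct transcription of the proof of Proposition~\ref{prop:elopEvects} with the common eigenvectors $x$ and $x^{*}$ replaced by the single functional $\varphi$, and the eigenvalue products $\alpha_{j} \beta_{j}$ replaced by $\varphi(a_{j}) \varphi(b_{j})$.
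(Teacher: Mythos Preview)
Your proposal is correct and follows essentially the same argument as the paper: both show that the multiplicative functional $\varphi$ is an eigenvector of $\mathcal{E}_{a,b}^{*}$ with eigenvalue $\sum_{j=1}^{n}\varphi(a_j)\varphi(b_j)$, via exactly the computation you wrote, and then invoke the standard fact that hypercyclic operators have adjoints with empty point spectrum. Your additional remark handling the non-separable case is a minor clarification not present in the paper, but the core proof is identical.
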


\begin{proof}
We will prove that $\mathcal{E}_{a,b}$ is not hypercyclic on $\mathscr{A}$ by showing that $\varphi \in \mathscr{A}^*$ is an eigenvector of the adjoint $\mathcal{E}_{a,b}^* \colon \mathscr{A}^* \to \mathscr{A}^*$.

Notice for any $s \in  \mathscr{A}$  that
\begin{align}
\langle \mathcal{E}_{a,b}^*(\varphi) , \: s  \rangle  &= \langle \varphi , \: \mathcal{E}_{a,b} (s) \rangle = \varphi\bigg( \sum_{j=1}^n a_j s b_j \bigg) \nonumber \\
&=  \sum_{j=1}^n \varphi(a_j) \varphi(s) \varphi(b_j)  = \varphi(s) \sum_{j=1}^n \varphi(a_j) \varphi(b_j) \label{eq:PhiLinMultip} \\
&=  \bigg( \sum_{j=1}^n \varphi(a_j) \varphi(b_j) \bigg) \langle \varphi , \: s \rangle  \nonumber 	
\end{align}
where \eqref{eq:PhiLinMultip} follows from the linearity and multiplicativity of $\varphi$.
Hence $\varphi$ is an eigenvector of  $\mathcal{E}_{a,b}^*$ corresponding to the eigenvalue $\sum_{j=1}^n \varphi(a_j) \varphi(b_j)$ and it follows that $\mathcal{E}_{a,b}$ is not hypercyclic on $\mathscr{A}$.
\end{proof}

When the Banach algebra $\mathscr{A}$ contains a unit element we note that  Saldivia~\cite[Theorem 5.3]{Sal03}  showed the left and right multipliers are not topologically transitive on $\mathscr{A}$.  
We recall that $T \colon \mathscr{A} \to \mathscr{A}$ is  a \emph{left multiplier} if T(ab) = T(a)b and a \emph{right multiplier} if $T(ab) =aT(b)$ for all $a, b \in \mathscr{A}$.   
When $\mathscr{A}$ is an infinite dimensional and separable Banach algebra the Birkhoff Transitivity Theorem (cf.~\cite[Theorem 1.2]{BM09}) implies that topological transitivity is equivalent to hypercyclicity. So when $\mathscr{A}$ contains a unit element  this gives an alternative proof that  $L_a$ and $R_a$ are not hypercyclic on $\mathscr{A}$ for any $a \in  \mathscr{A}$.

Next we apply Theorem~\ref{thm:ElopNotHcBanAlgMulFnal} to the Banach algebra $\mathscr{L}(X_{AH})$.
\begin{cor} \label{cor:ElopNotHcArgHay}
No elementary operator is hypercyclic on $\mathscr{L}(X_{AH})$.
\end{cor}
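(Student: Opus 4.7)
The plan is to apply Theorem \ref{thm:ElopNotHcBanAlgMulFnal} directly, so the only task is to exhibit a non-trivial multiplicative linear functional on $\mathscr{L}(X_{AH})$. The Argyros--Haydon decomposition \eqref{eq:operFormAH} writes every $T \in \mathscr{L}(X_{AH})$ uniquely as $T = \lambda I + K$ with $\lambda \in \mathbb{C}$ and $K \in \mathscr{K}(X_{AH})$, which suggests defining $\varphi \colon \mathscr{L}(X_{AH}) \to \mathbb{C}$ by $\varphi(\lambda I + K) = \lambda$. Conceptually this is just the canonical quotient map $\mathscr{L}(X_{AH})/\mathscr{K}(X_{AH}) \cong \mathbb{C}$ followed by identification, and it is clearly non-trivial since $\varphi(I) = 1$.

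First I would verify that $\varphi$ is well defined: if $\lambda_1 I + K_1 = \lambda_2 I + K_2$ then $(\lambda_1 - \lambda_2) I = K_2 - K_1 \in \mathscr{K}(X_{AH})$, and since $X_{AH}$ is infinite dimensional the identity $I$ is not compact, forcing $\lambda_1 = \lambda_2$. Linearity of $\varphi$ is then immediate from the uniqueness of the decomposition. For multiplicativity I would compute
\begin{equation*}
(\lambda_1 I + K_1)(\lambda_2 I + K_2) = \lambda_1 \lambda_2 I + \bigl(\lambda_1 K_2 + \lambda_2 K_1 + K_1 K_2\bigr),
\end{equation*}
where the bracketed operator is compact because $\mathscr{K}(X_{AH})$ is an ideal of $\mathscr{L}(X_{AH})$; hence $\varphi$ of the product equals $\lambda_1 \lambda_2 = \varphi(\lambda_1 I + K_1)\varphi(\lambda_2 I + K_2)$.

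Once $\varphi$ is in hand, Theorem \ref{thm:ElopNotHcBanAlgMulFnal} applied to the Banach algebra $\mathscr{A} = \mathscr{L}(X_{AH})$ immediately yields that no elementary operator $\mathcal{E}_{A,B}$ is hypercyclic. I do not foresee any real obstacle here: the decomposition \eqref{eq:operFormAH} does all the heavy lifting, and the only subtlety is noting that compactness of $I$ would contradict the infinite dimensionality of $X_{AH}$, so uniqueness of the scalar $\lambda$ holds.
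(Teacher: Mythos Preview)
Your proposal is correct and follows essentially the same route as the paper: define $\varphi(\lambda I + K) = \lambda$, check it is a well-defined non-trivial multiplicative linear functional on $\mathscr{L}(X_{AH})$, and apply Theorem~\ref{thm:ElopNotHcBanAlgMulFnal}. Your treatment of well-definedness via the non-compactness of $I$ is in fact slightly more explicit than the paper's, which simply cites the uniqueness in \eqref{eq:operFormAH}.
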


\begin{proof}
We define the linear functional $\varphi \colon \mathscr{L}(X_{AH}) \to \mathbb{C}$ by
\begin{equation*}
\varphi(\lambda I + K) = \lambda
\end{equation*}
where $\lambda \in \mathbb{C}$ and $K \in \mathscr{K}(X_{AH})$.
This is well defined since it follows from \eqref{eq:operFormAH} that the representation  of any operator in $\mathscr{L}(X_{AH})$ is unique.  

Further notice $\varphi$ is a multiplicative functional since for any $T = \lambda_0 I + K_0$ and $S = \lambda I + K \in \mathscr{L}(X_{AH})$ we have that 
\begin{align*}
\varphi\left(TS\right) &=  \varphi \left( \lambda_0 \lambda I + \lambda_0 K + \lambda K_0 + K_0 K \right) = \lambda_0 \lambda
\shortintertext{and}
\varphi\left(T\right)\varphi\left(S\right) &=  \varphi \left( \lambda_0 I + K_0 \right) \varphi \left( \lambda I + K \right) =  \lambda_0 \lambda.
\end{align*}
So it follows by Theorem~\ref{thm:ElopNotHcBanAlgMulFnal} that elementary operators are not hypercyclic on $\mathscr{L}(X_{AH})$.
\end{proof}

Another example covered by Theorem \ref{thm:ElopNotHcBanAlgMulFnal} is provided by the  Banach spaces $\mathfrak{X}_k$, which were constructed by Tarbard~\cite{Tar12} for any given $k \geq 2$.   In \cite{Tar12} he showed that the dual $\mathfrak{X}_k^*= \ell_1$, that the space $\mathscr{K}(\mathfrak{X}_k)$ of compact operators on $\mathfrak{X}_k$ is separable and  that $\mathfrak{X}_k$ admits a non-compact, strictly singular $S \in \mathscr{L}(\mathfrak{X}_k)$, with $S^j \neq 0$ for $1 \leq j < k$ and $S^k = 0$, such that every $T \in \mathscr{L}(\mathfrak{X}_k)$ can be uniquely represented as
\begin{equation}  \label{defn:TarbardOper}
T = \sum_{j=0}^{k-1} \lambda_j S^j + K
\end{equation}
where $\lambda_j  \in \mathbb{R}$ for $0 \leq j \leq k-1$ and  $K \in \mathscr{K}(\mathfrak{X}_k)$.  
Since $\mathscr{K}(\mathfrak{X}_k)$ has finite codimension in $\mathscr{L}(\mathfrak{X}_k)$  it follows that $\mathscr{L}(\mathfrak{X}_k)$ is separable under the operator norm topology.

It is not difficult to check that $\mathscr{L}(\mathfrak{X}_k)$ supports the non-trivial,  multiplicative linear functional $\varphi$ defined by $\varphi(T) = \lambda_0$, where $T \in \mathscr{L}(\mathfrak{X}_k)$ is as given in \eqref{defn:TarbardOper}.
So it follows by  Theorem \ref{thm:ElopNotHcBanAlgMulFnal} that no elementary operator is hypercyclic on $\mathscr{L}(\mathfrak{X}_k)$.

We note the argument from Corollary~\ref{cor:ElopNotHcArgHay} does not apply to $\mathscr{K}(X_{AH})$ since $\varphi \big|_{\mathscr{K}(X_{AH})} \equiv 0$. In fact, in the next example we see that $\mathscr{K}(X_{AH})$  supports hypercyclic generalised derivations, which  further reveals the  subtle nature of the dynamical behaviour of elementary operators. 

\begin{eg} \label{eg:LmulHcCptArgHay}
Since $\mathscr{K}(X_{AH})$ is an admissible Banach ideal, we recall from Example \ref{eg:everyBanIdSupports} that it supports a generalised derivation that satisfies the Hypercyclicity Criterion.

In particular, \cite[Theorem 8.9]{GEP11} gives that there exists some $T  = \lambda I + K \in \mathscr{L}(X_{AH})$, for $\lambda \in \mathbb{C}$ and $K \in \mathscr{K}(X_{AH})$, such that $T$ satisfies the Hypercyclicity Criterion. 
So it follows that the generalised derivation $L_K - R_{-\lambda I} \colon \mathscr{K}(X_{AH}) \to \mathscr{K}(X_{AH})$ satisfies the Hypercyclicity Criterion.
\hfill \qedsymbol
\end{eg}

Hence the seemingly minor difference of  one dimension between the spaces $\mathscr{K}(X_{AH})$ and $\mathscr{L}(X_{AH})$ completely alters the hypercyclicity of the generalised derivation $L_K - R_{-\lambda I}$.  
The delicate nature of this question  is further illustrated below where we show that $\mathscr{K}(X_{AH})$ does not support any hypercyclic commutator maps.

\begin{prop}   \label{prop:CommNotHcCptArgHay}
Let  $A \in \mathscr{L}(X_{AH})$.  Then the commutator map $\Delta_A = L_A - R_A$ is not hypercyclic on $\mathscr{K}(X_{AH})$.
\end{prop}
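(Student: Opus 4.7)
The plan is to reduce the commutator map on $X_{AH}$ to a commutator by a compact operator and then invoke Theorem~\ref{thm:derivRieszNotHc}. First, I would exploit the scalar-plus-compact structure \eqref{eq:operFormAH} of $\mathscr{L}(X_{AH})$: write
\begin{equation*}
A = \lambda I + K
\end{equation*}
for some $\lambda \in \mathbb{C}$ and $K \in \mathscr{K}(X_{AH})$. Since the identity operator commutes with every $T \in \mathscr{K}(X_{AH})$, the scalar part drops out, giving
\begin{equation*}
\Delta_A(T) = AT - TA = (\lambda I + K)T - T(\lambda I + K) = KT - TK = \Delta_K(T)
\end{equation*}
for all $T \in \mathscr{K}(X_{AH})$. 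Hence $\Delta_A = \Delta_K = \tau_{K,K}$ on $\mathscr{K}(X_{AH})$.

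Next I would observe that $\mathscr{K}(X_{AH})$ is a separable admissible Banach ideal of $\mathscr{L}(X_{AH})$ (as already noted in the discussion preceding Corollary~\ref{cor:ElopNotHcArgHay}) and that $K \in \mathscr{K}(X_{AH})$ is in particular a Riesz operator, as remarked immediately after Theorem~\ref{thm:derivRieszNotHc}. Applying that theorem with the Riesz pair $(K,K)$ yields that the generalised derivation $\tau_{K,K}$ is not hypercyclic on $\mathscr{K}(X_{AH})$, which in view of the identification above is precisely the statement that $\Delta_A$ is not hypercyclic on $\mathscr{K}(X_{AH})$.

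No step poses a serious obstacle: the whole argument is a two-line reduction followed by a citation of an earlier result. The only point worth being careful about is that Theorem~\ref{thm:derivRieszNotHc} is stated for \emph{separable} Banach ideals, so one must verify separability of $\mathscr{K}(X_{AH})$ explicitly, but this is already established via the separability of $X_{AH}^* \cong \ell^1$ together with the approximation property of $X_{AH}$.
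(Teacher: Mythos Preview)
Your proposal is correct and follows essentially the same route as the paper: write $A=\lambda I+K$, observe that the scalar part cancels so $\Delta_A=\Delta_K$, and then invoke Theorem~\ref{thm:derivRieszNotHc} for the compact (hence Riesz) operator $K$ on the separable Banach ideal $\mathscr{K}(X_{AH})$. The only cosmetic difference is that you note admissibility of $\mathscr{K}(X_{AH})$, which is not actually needed here since Theorem~\ref{thm:derivRieszNotHc} requires only separability.
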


\begin{proof}
The operator $A \in \mathscr{L}(X_{AH})$ has the form $A = \lambda I + K$ for some $\lambda \in \mathbb{C}$ and $K \in \mathscr{K}(X_{AH})$.  So $\Delta_A \colon \mathscr{K}(X_{AH}) \to \mathscr{K}(X_{AH})$  is given by
\begin{equation*}
\Delta_A = L_{\lambda I + K} - R_{\lambda I + K} =  L_K - R_K = \Delta_K.
\end{equation*}
Since $\mathscr{K}(X_{AH})$ is a separable Banach ideal and $K$ is compact it follows from Theorem~\ref{thm:derivRieszNotHc} that $\Delta_A$ is not hypercyclic on $\mathscr{K}(X_{AH})$.
\end{proof}
The argument from Proposition~\ref{prop:CommNotHcCptArgHay} can also be used to prove directly that no commutator map is hypercyclic on $\mathscr{L}(X_{AH})$.

The next example reveals another twist in the dynamical behaviour of  elementary operators.  We will see they act quite differently on the space $\left( \mathscr{L}(X_{AH}),\: SOT \right)$ endowed with the strong operator topology.

\begin{eg} \label{eg:SOTbehaviour}
We claim that there exists a left multiplier $L_T$ that displays contrasting hypercyclic behaviour on the space $\left( \mathscr{L}(X_{AH}),\: SOT \right)$ and the space $\left( \mathscr{L}(X_{AH}), \: \lVert \, \cdot \, \rVert \right)$ endowed with the operator norm topology.

In fact, combining previously known results, we recall that \cite[Theorem 8.9]{GEP11} 	gives that there exists an operator $T \in \mathscr{L}(X_{AH})$ that satisfies the Hypercyclicity Criterion and by \cite[Corollary 6]{CT01} it further follows that $L_T$ is hypercyclic on $\left( \mathscr{L}(X_{AH}), \: SOT \right)$.  
This contrasts with the behaviour described in Corollary \ref{cor:ElopNotHcArgHay}, from which it follows that $L_T$ is not hypercyclic on $\left( \mathscr{L}(X_{AH}), \: \lVert \, \cdot \, \rVert \right)$.
\hfill \qedsymbol
\end{eg}

To explain the diverging behaviour from Example \ref{eg:SOTbehaviour}, we note that the space $\mathscr{F}(X_{AH})$ of finite rank operators on $X_{AH}$ is dense in  $\left( \mathscr{L}(X_{AH}), \: SOT \right)$.  The density of $\mathscr{F}(X_{AH})$  in $\left( \mathscr{K}(X_{AH}), \: \lVert \, \cdot \, \rVert \right)$ is also used in the  results which yield instances of hypercyclic elementary operators in this setting.
However, it does not hold that $\mathscr{F}(X_{AH})$  is dense in  $\left( \mathscr{L}(X_{AH}), \: \lVert \, \cdot \, \rVert \right)$ and  we thus arrive at the situation described by Corollary \ref{cor:ElopNotHcArgHay}.

\section{Further Questions}  \label{sec:questions}

Some natural questions arising from this paper include the following.
\begin{enumerate}[label=\arabic*., leftmargin=*, rightmargin=1em, itemsep=1ex] 
\item Do  reasonable sufficient conditions exist on the pair $(A, B)$ that induce hypercyclic $\tau_{A,B}$ on separable Banach ideals?

\item Corollary~\ref{cor:ElopNotHcArgHay} and Example~\ref{eg:LmulHcCptArgHay} contrast the hypercyclic properties of particular generalised derivations on different spaces.  Furthermore in \cite{GST16} it is shown for particular Banach spaces $X$ that $\mathscr{N}(X)$ does not support a hypercyclic commutator map.  Do there exist further examples of contrasting hypercyclic behaviour of elementary operators on  different Banach ideals?
\end{enumerate}

\section*{Acknowledgement}

This article is part of the PhD thesis of the author and he would like to thank his supervisor Hans-Olav Tylli for helpful comments and remarks during its preparation.

\bibliographystyle{abbrv}

\end{document}